\newtheorem{thm}{Theorem}[section]
\newtheorem{cor}{Corollary}[section]
\newtheorem{ex}{Example}[section]
\newtheorem{lemma}{Lemma}[section]
\newtheorem{defo}{Definition}[section]
\newtheorem{prop}{Proposition}[section]
\title{\textbf{ON GRADED NR-CLEAN RINGS}}
\author{ I. Namrok\footnote{namrokismail@gmail.com}}
\affil{}
\date{}
\begin{document}
\maketitle
\begin{abstract} This paper presents an extension of the concept of NR-clean introduced in \cite{1} to graded ring theory. We define and explore graded NR-clean rings, which generalize the  class of graded U-nil clean previously studied in \cite{66}. We provide examples and basic properties of such rings, and investigate their extensions to including matrix rings, group rings, as well as trivial ring extensions. 
\end{abstract}
\vskip0.1in
\textbf{keywords}: Graded rings and modules, NR-clean ring, group ring, matrix ring, trivial ring extension.\\ \\
\textbf{Mathematics Subject Classification}. Primary: 16W50. Secondary: 16U99, 16S34, 16S50.
\section{Introduction}

In 2015, Khashan \cite{1} introduced a new interesting class of rings called \textit{NR-clean}. Indeed, Khashan has investigated some extensions of such rings, in particular matrix rings. Motivated by the importance of graded ring theory in many areas (Algebraic geometry, Representation theory, Quantum Mechanics,...). We aim in this paper  to extend NR-clean notion to graded rings. The idea of investigating some classes of ring from graded ring theory view has been inspiring lately many authors; for more details the reader is referred to \cite{12,19,16,17,15}.

Throughout this paper, all rings are associative with unity. Let $R$ be a ring; $J(R)$, $U(R)$, $N(R)$ and $Idem(R)$  denote respectively, the Jacobson radical of $R$, the multiplicative group of units of $R$, the set of nilpotent elements of $R$ and the set of idempotents of $R$.

Let $G$ be a group with identity $e$. We observe a $G$-graded ring, that is a ring $R$ such that $R=\oplus_{g\in G} R_g$ where
 $\{R_g\}_{g\in G}$ is a family of additive subgroups of $R$, and $R_gR_h\subseteq R_{gh}$ whenever $g,h\in G$. Elements of the set $h(R)= \bigcup_{g\in G}R_g $ are said \textit{homogeneous}; and for $g\in G$, the subgroup $R_g$ is called the $g$-th component of $R$, and elements of $R_g$ are said of \textit{degree} $g$. If $R=\oplus_{g\in G} R_g$ is $G$-graded, then every homogeneous idempotent element of $R$ must come from $R_e$. Moreover, if $f$ is a homogeneous idempotent element of $R$, then  $fR$ is a $G$-graded ring whose unity is $f$, and its $g$-th component is $(fR)_g=fR_g$ where $g\in G$.

 Given $n$ $G$-graded rings $A_1,R_2,\dots,A_n$. The direct product $R=\prod_{i=1}^nA_i$ is a $G$-graded ring, where its $g$-th component is $A_g=\prod_{i=1}^nA_i^g$, with $A_i^g$ is the $g$-th component of the ring $A_i$.
 
 Given two $G$-graded rings $R$ and $S$; a ring homomorphism $\varphi:R \longrightarrow S$ is said to be \textit{graded-homomorphism} if $\varphi(h(R)) \subseteq h(S)$; and if moreover $\varphi(R_g)\subseteq S_g$ for every $g\in G$, then $\varphi$ is called \textit{degree-preserving homomorphism}. 
 
A proper right (left or two-sided) ideal $I$ of a $G$-graded ring $R= \oplus_{g\in G}R_g$ is called \textit{homogeneous} (or \textit{graded}), if $I= \oplus_{g\in G}I\cap R_g$; equivalently, if the ideal $I$  can be generated by homogeneous elements of $R$.  In addition, a homogeneous ideal is called \textit{graded-nil}, if every homogeneous element of this ideal is nilpotent. 

If $I$ is a two-sided homogeneous ideal of $R$, then $R/I$ is a $G$-graded ring where its $g$-th component is $(R/I)_g=R_g/(I\cap R_g)$.

A homogeneous ideal $I$ of a graded ring $R$ is called \textit{graded-maximal}, if there is no other proper homogeneous ideal of $R$ which contains $I$. Moreover, the intersection of all graded-maximal right ideals of a graded ring $R$ defines the \textit{graded Jacobson radical} of $R$, denoted $J^g(R)$. Further, according to Proposition 2.9.1 in \cite{6} we have that $J^g(R)$ is a homogeneous two-sided ideal.

Let $R= \oplus_{g\in G}R_g$ is a $G$-graded ring. A $G$-graded $R$-module is an $R$-module $E$, such that $E= \oplus_{g\in G}E_g$, where $\{E_g\}_{g\in G}$ are additive subgroups of $E$, and $R_hE_g\subseteq E_{hg}$ for all $g,h\in G$.

Let $A$ be a ring and $E$ an $(A,A)$-bimodule. We denote by $A\propto E$ the set of pairs $(a,e)$ with pairwise addition and multiplication given by $(a,e)(b,f)=(ab,af+eb)$; the ring $R:=A\propto E$ is called the \textit{trivial extension} of $A$ by $E$. For more background of trivial ring extensions of commutative rings, we invite the reader to check \cite{20,23,21}.  If $A$ is a $G$-graded ring  and $E$ a $G$-graded $(A,A)$-bimodule, then according to \cite{20} $R=A\propto E$ is $G$-graded with the $g$-th component $R_g=A_g\oplus E_g$ for every $g\in G$.

Let $R= \oplus_{g\in G}R_g$ be a $G$-graded ring. According to \cite{8}, the group ring $R[G]$ is $G$-graded with the $g$-th component is $(R[G])_g= \oplus_{h\in G}R_{gh^{-1}}h$ and with the multiplication defined by $(r_gg')(r_hh')=r_gr_h(h^{-1}g'hh')$, where $g,g',h$, $h' \in G$,$r_g \in R_g$ and $r_h\in R_h$. If $H$ is a normal subgroup of $G$, then according to \cite{6} $R[H]$ can be viewed as a $G$-graded ring, where its $g$-th component is $(R[H])_g=\oplus_{h\in H}R_{gh^{-1}}h$. Moreover, the ring $R$ can be seen as a $G/H$-graded ring (see page 178 in \cite{6}) as follow: $R=\bigoplus_{C\in G/H}R_C$, where $R_C=\oplus_{x\in C}R_x$ for $C\in G/H$. Clearly $R[H]$ can also be observed as a $G/H$-graded ring.

Let $R=\oplus_{g\in G}R_g$ be a $G$-graded ring and $n$ a natural number. Indeed, for every $\sigma=(h_1,h_2,\dots,h_n) \in G^n$, the matrix ring $M_n(R)$ is $G$-graded where its $g$-th component is $M_n(R)_g(\sigma)=\begin{pmatrix}
R_{h_1gh_1^{-1}} &R_{h_1gh_2^{-1}}&\dots&R_{h_1gh_n^{-1}}\\R_{h_2gh_1^{-1}}&R_{h_2gh_2^{-1}}&\dots&R_{h_2gh_n^{-1}}\\ \vdots&\vdots&\dots&\vdots \\ R_{h_ngh_1^{-1}}&R_{h_ngh_2^{-1}}&\dots&R_{h_ngh_n^{-1}} \end{pmatrix}$ for every $g\in G$. Throughout this paper, if we consider the previous grading of the matrix ring $M_n(R)$, then we denote it $M_n(R)(\sigma)$.

An element $r$ of a ring $R$ is called \textit{regular} (in the sense of Von Neumann), if $r=rar$ for some $a\in R$. From graded ring theory view; if $R=\oplus_{g\in G}R_g$ is a $G$-graded ring; a homogeneous element $r\in R_g$ (where $g\in G$) is called \textit{graded-regular}, if $r=rar$ for some $a\in h(R)$. Indeed, since the sum $\sum_{g\in G}R_g$ is direct, then $a\in R_{g^{-1}}$. In this paper, we denote by $gr(R)$ the set of all graded-regular elements of $R$.

Khashan \cite{1} has defined a ring to be \textit{NR-clean}, if every element of $R$ is a sum of a regular element and a nilpotent. Concerning this paper, we define a graded ring to be \textit{graded NR-clean}, if every homogeneous element can be written as a sum of a homogeneous nilpotent and a graded-regular element. 

We begin by giving in the second section, examples and basic properties of graded NR-clean rings. We also investigate the relation between the two classes of graded U-nil clean rings and graded NR-clean rings.

Section 3 is devoted to the transfer of  the graded NR-clean property to some ring extensions. Indeed, we examine first in the main result of this paper, the question of when the matrix ring is graded NR-clean; further, we give  conditions under which the group ring is graded NR-clean. Also, we investigate trivial ring extensions of graded NR-clean rings.

Let $G$ be a group with identity $e$. Throughout this paper, unless otherwise specified, $G$ is  the grading group of graded rings.

\section{Graded NR-clean rings}

This section provides essential properties of graded NR-clean rings that are utilized throughout this paper. We also furnish an example of graded NR-clean ring. Further, we demonstrate that graded U-nil clean rings is a proper subclass of graded NR-clean rings. Moreover, we establish that having the NR-clean property of $e$-component of a graded ring, where $e$ is the identity of the grading group, does not necessarily imply that the entire ring is graded NR-clean.

\begin{defo}\label{d1}
A homogeneous element $x$ of a $G$-graded ring $R$ is called graded NR-clean if $x=r+n$ where $r\in gr(R)$ and $n\in N(R)\cap h(R)$. The ring $R$ is called graded NR-clean if every  element of $h(R)$ is graded NR-clean.
\end{defo}

Let us notice, that given a graded ring $R=\oplus_{g\in G} R_g$, and $a=r+n$   a graded NR-clean decomposition of  $a \in R_g$ (where $g\in G$) then $r,n\in R_g$. Indeed,  since the sum $\sum_{g\in G} R_g$ is direct, then  $r$ and $n$ have also the degree $g$ (i.e. $r,n\in R_g$). 


In the next example, we construct a $G$-graded ring where $G$ is a cyclic group of order three.

\begin{ex}\label{e1}
Let $A$ be an NR-clean ring (for instance, $A= \mathbb{Z}_4$ is an NR-clean ring according to \cite{1}). Suppose that $G=\{e,g,h\}$ is a cyclic group of order three. Then, the matrix ring $R=M_2(A)=R_e\oplus R_g \oplus R_h$ is $G$-graded where its components are: $R_e= \begin{pmatrix} A & 0 \\ 0 & A \end{pmatrix}$, $R_g=\begin{pmatrix} 0 & A \\ 0 & 0 \end{pmatrix}$ and $R_h=\begin{pmatrix} 0 & 0 \\ A & 0 \end{pmatrix}$. On other hand, according to Proposition 6 in \cite{1}, $A^2$ is $G$-graded NR-clean. Further, the two rings  $R_e$ and $A^2$ are isomorphic; hence $R_e$ is an NR-clean ring. In addition, every element from $R_g$ and $R_h$ is nilpotent. Thus, $R$ is $G$-graded NR-clean.
\end{ex}

According to, a homogeneous element $r$ of a graded ring $R$, is called \textit{graded unit regular} if $r=fu$ where $f$ is a homogeneous idempotent, and $u$ is a homogeneous unit. Clearly, every graded unit regular element is graded-regular. In \cite{66}, the author defined a graded ring  to be \textit{graded U-nil clean}, if every homogeneous element can be written as a sum of a graded unit regular and a homogeneous nilpotent. It is clear that graded U-nil clean rings is a subclass of graded NR-clean rings. Indeed, Example \ref{e2}  shows that this containment is proper.

\begin{ex}\label{e2}
    Let $S=K[[x]]$ where $K$ is a field such that $char(K)\neq 2$ and let $Q$ be the field of fractions of $S$. We define:
    $$ A=\{a\in End(S_K):\exists q\in Q~and~ \exists n \geq 1~ with~ a(b)=q(b)~\forall b\in x^nS\}.$$ According to \cite{2}, $A$ is an NR-clean ring which is not U-nil clean. Now, let $R=M_2(A)$ and $G$ be a cyclic group of order three. According to Example \ref{e1}, $R$ is $G$-graded NR-clean. Additionally, we have that $R_e \cong A\times A$ which is not U-nil clean (according to Proposition 2.5 in \cite{2} ). Hence, by Proposition 3.1 in, $R$ is not graded U-nil clean.
\end{ex}

\begin{prop}\label{p1}
Let $R=\oplus_{g\in G}R_g$ be a $G$-graded NR-clean ring. Then,
\begin{enumerate}
    \item $R_e$ is an NR-clean ring.
    \item Every graded-homomorphic image of $R$ is $G$-graded NR-clean.
    
\end{enumerate}
\end{prop}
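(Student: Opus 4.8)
For part (1), the plan is to take an arbitrary $x \in R_e$ and use the fact that $R$ is graded NR-clean to write $x = r + n$ with $r \in gr(R)$ and $n \in N(R) \cap h(R)$. By the remark following Definition \ref{d1}, since $x$ has degree $e$, both $r$ and $n$ lie in $R_e$. It remains to check that $r$, viewed as an element of $R_e$, is still regular and that $n$ is still nilpotent in $R_e$. Nilpotency is immediate since $n \in N(R)$ and $n \in R_e$ means all its powers stay in $R_e$, so $n \in N(R_e)$. For regularity: $r \in gr(R)$ means $r = r a r$ for some $a \in h(R)$; since the sum $\sum_{g} R_g$ is direct and $r \in R_e$, comparing degrees forces $a \in R_e$ (this is exactly the observation made in the excerpt about graded-regular elements). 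Hence $r = rar$ with $a \in R_e$, so $r$ is (von Neumann) regular in $R_e$. Therefore $x = r + n$ is an NR-clean decomposition inside $R_e$, and $R_e$ is NR-clean. I expect no obstacle here; it is essentially a degree-counting argument.

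For part (2), let $\varphi : R \longrightarrow S$ be a surjective graded-homomorphism onto a $G$-graded ring $S$. The first subtlety is that "graded-homomorphic image" should be interpreted with the induced grading; I would either assume $\varphi$ is degree-preserving (so $\varphi(R_g) \subseteq S_g$ and, by surjectivity, $S_g = \varphi(R_g)$), or replace $S$ by $R/I$ for a two-sided homogeneous ideal $I = \ker\varphi$, which by the discussion in the introduction is $G$-graded with $(R/I)_g = R_g/(I \cap R_g)$. Then I would take an arbitrary homogeneous element $\bar{x} \in S_g$, lift it to $x \in R_g$, write $x = r + n$ with $r \in gr(R) \cap R_g$ and $n \in N(R) \cap R_g$, and push forward: $\bar{x} = \varphi(r) + \varphi(n)$. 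Since ring homomorphisms send nilpotents to nilpotents, $\varphi(n) \in N(S)$, and it is homogeneous of degree $g$. Since $r = rar$ with $a \in R_{g^{-1}}$, applying $\varphi$ gives $\varphi(r) = \varphi(r)\varphi(a)\varphi(r)$ with $\varphi(a) \in h(S)$, so $\varphi(r) \in gr(S)$. Hence $\bar{x}$ is graded NR-clean, and $S$ is graded NR-clean.

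The only real point requiring care — and the one I would state explicitly — is the surjectivity of $\varphi$ onto the homogeneous components, which guarantees that every homogeneous element of the image actually lifts to a homogeneous element of $R$ of the same degree; without this, one could not guarantee the decomposition descends. Given the conventions set up in the introduction (homogeneous ideals, quotient gradings, degree-preserving maps), this is automatic, so the proof is a routine transport-of-structure argument. I would organize the write-up as two short paragraphs mirroring the two parts, emphasizing the degree-comparison step in each.
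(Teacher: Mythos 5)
Your proposal is correct and follows essentially the same route as the paper: restrict the graded NR-clean decomposition to $R_e$ for part (1), and push the decomposition forward along the epimorphism for part (2). The two points you flag explicitly --- that degree comparison forces the quasi-inverse $a$ into $R_e$ in part (1), and that surjectivity on homogeneous components is needed so every element of $h(S)$ lifts in part (2) --- are actually glossed over in the paper's own (terser) proof, so your write-up is, if anything, slightly more careful.
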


\begin{proof}
\begin{enumerate}
    \item Let $a \in R_e$. Since $R$ is graded NR-clean, then $a=r+n$ where $r\in gr(R)\cap R_e$ and $n \in N(R)\cap R_e$. hence, $a$ is an NR-clean element of the ring $R_e$. Therefore, the ring $R_e$ is NR-clean.
    \item Let $S$ be a G-graded ring and $\varphi :R \longrightarrow S$  a graded-epimorphism. Assume that $R$ is graded NR-clean and let $a\in h(R)$. Since $R$ is graded NR-clean, then we can write $a=r+n$ where $ r \in gr(R)$ and $n \in N(R)\cap h(R)$. Hence,  $\varphi(a)=\varphi(r)+\varphi(n)$. Clearly $\varphi(n) \in N(S)\cap h(S)$. It remains to show that the homogeneous element $\varphi(r)$ is a graded-regular element of the ring $S$. Indeed, we know that there exists $x\in h(R)$ such that $r=rxr$; thus $\varphi(r)=\varphi(r)\varphi(x)\varphi(r)$. Since $\varphi$ is a graded-homomorphism, then $\varphi(x) \in h(S)$. Therefore $\varphi(r)\in gr(S)$. Hence, $S$ is graded NR-clean.
\end{enumerate}
\end{proof}

    Let us return to Proposition \ref{p1}. One may ask, if the converse of the first point holds true. In fact, this is not true in general as shows the next example.

\begin{ex}
    Let's consider the $\mathbb{Z}$-graded ring  $R:=\oplus_{n\in \mathbb{Z}}R_n$, where $R_n=\mathbb{Z}_2X^n$ for $n \geq 0$ and $R_n=0$ for $n < 0$. Indeed, we have $R_0=\mathbb{Z}_2$ is a NR-clean ring. Though, the homogeneous element $X \in R_1$ is not graded NR-clean. Consequently, $R$ is not $\mathbb{Z}$-graded NR-clean.
\end{ex}

\begin{prop}\label{p3}
    Let $R_1,R_2,\dots,R_k$ be $G$-graded rings. Then, $R=\prod_{i=1}^kR_i$ is a $G$-graded NR-clean ring if and only if $R_i$ is graded NR-clean for all $i\in \{1,2,\dots,k\}$. 
\end{prop}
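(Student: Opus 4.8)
This is a finite direct product characterization. The strategy is the standard componentwise argument, being careful about the *graded* versions of "regular" and "nilpotent". Recall that by the discussion following the grading of direct products, $R=\prod_{i=1}^k R_i$ is $G$-graded with $g$-th component $R_g=\prod_{i=1}^k R_i^g$, so a homogeneous element of $R$ of degree $g$ is precisely a tuple $a=(a_1,\dots,a_k)$ with each $a_i\in R_i^g$, i.e. each $a_i$ homogeneous of the same degree $g$. I want to prove: $a$ is graded NR-clean in $R$ $\iff$ each $a_i$ is graded NR-clean in $R_i$.

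For the backward direction, suppose each $R_i$ is graded NR-clean and take $a=(a_1,\dots,a_k)\in h(R)$, say of degree $g$. Write $a_i=r_i+n_i$ with $r_i\in gr(R_i)$ (so $r_i=r_i x_i r_i$ for some homogeneous $x_i$, necessarily of degree $g^{-1}$) and $n_i\in N(R_i)\cap h(R_i)$. Set $r=(r_1,\dots,r_k)$, $n=(n_1,\dots,n_k)$, $x=(x_1,\dots,x_k)$. Then $a=r+n$, and $n$ is homogeneous of degree $g$; it is nilpotent because $n^m=0$ once $m$ exceeds every nilpotency index $m_i$ of $n_i$. Also $x\in h(R)$ (degree $g^{-1}$) and $rxr=r$ componentwise, so $r\in gr(R)$. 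Hence $a$ is graded NR-clean, and $R$ is graded NR-clean.

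For the forward direction, one route is to invoke Proposition \ref{p1}(2): each projection $\pi_i:R\to R_i$ is a graded-epimorphism (it sends homogeneous elements to homogeneous elements of the same degree), so if $R$ is graded NR-clean then so is each $R_i$. This is the cleanest argument and avoids re-deriving the componentwise decomposition by hand. Alternatively, and just as easily, one decomposes directly: given $a_i\in h(R_i)$ of degree $g$, form $a=(0,\dots,a_i,\dots,0)$, which is homogeneous of degree $g$ in $R$, write $a=r+n$ as a graded NR-clean decomposition, note $r,n$ both have degree $g$ hence are tuples of degree-$g$ homogeneous components, and read off the $i$-th coordinate: $a_i=r^{(i)}+n^{(i)}$ with $n^{(i)}$ homogeneous nilpotent and $r^{(i)}$ graded-regular (the witness $x$ for $r=rxr$ restricts to a homogeneous witness $x^{(i)}$ for $r^{(i)}$).

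There is no real obstacle here; the only point requiring a moment's care is making sure the homogeneity and degree bookkeeping is consistent — in particular that the Von Neumann witness $x$ in the graded-regular decomposition is itself homogeneous (of the inverse degree), which is guaranteed by the definition of $gr(R)$ and the directness of the grading sum, and that a finite product of nilpotents with possibly different indices is still nilpotent (take the maximum of the indices). Given these, both implications are immediate.
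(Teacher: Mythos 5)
Your proposal is correct and follows essentially the same route as the paper: the forward direction via the projection maps being graded-epimorphisms together with Proposition \ref{p1}, and the backward direction by the componentwise decomposition $a=(r_1,\dots,r_k)+(n_1,\dots,n_k)$. You merely spell out the details (the homogeneous Von Neumann witness of degree $g^{-1}$ and the maximum-of-nilpotency-indices point) that the paper labels ``obviously''.
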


\begin{proof}
    "$\Longrightarrow$" Assume that $R$ is graded NR-clean. It is clear that the projection map $\pi_i:R \longrightarrow R_i$ is a graded-epimorphism for every $i\in \{1,2,\dots,k\}$. Thus, according to Proposition \ref{p1} $R_i$ is graded NR-clean for each $i\in \{1,2,\dots,k\}$.

    "$\Longleftarrow$" Suppose that $R_i$ is graded NR-clean for each $i$. Let $(a_1,a_2,\dots,a_k)$ be a homogeneous element of $R$ of degree $g$ where $g\in G$. For each $i$ we can choose $r_i \in gr(R_i)$ and $n_i \in N(R_i) \cap h(R_i)$ such that $a_i=r_i+n_i$. Obviously we have $r=(r_1,r_2,\dots,r_k) \in gr(R)$ and $n=(n_1,n_2,\dots,n_k) \in N(R)\cap h(R)$; therefore $a=r+n$ is a graded NR-clean element of $R$. It follows $R$ is graded NR-clean.
\end{proof}

In \cite{12}, the authors defined a graded ring to be \textit{graded nil-good}, if every homogeneous element is either nilpotent or can be written as a sum of homogeneous unit and a homogeneous nilpotent. In fact, in \cite{66} it has been proved that the class of graded nil-good rings is a proper subclass of the class of U-nil clean rings. Therefore, according to Example \ref{e2}, the class of graded U-nil clean rings  lies properly between the two  classes of graded nil-good rings and graded NR-clean rings. The next proposition shows that these three classes coincide under some conditions.

\begin{prop}
    Let $R=\oplus_{g\in G}R_g$ be a graded ring with $gr(R)\subseteq Z(R)$ and $Idem(R_e)=\{0,1\}$. Then, $R$ is graded NR-clean if and only if $R$ is graded nil-good.
\end{prop}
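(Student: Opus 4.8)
The plan is to prove the nontrivial implication: assuming $R$ is graded NR-clean (with $gr(R)\subseteq Z(R)$ and $Idem(R_e)=\{0,1\}$), show $R$ is graded nil-good; the reverse implication is already immediate since graded nil-good rings form a subclass of graded U-nil clean rings, hence of graded NR-clean rings (this chain of containments is recalled just before the statement). So let $a\in h(R)$ be homogeneous and write $a=r+n$ with $r\in gr(R)$ and $n\in N(R)\cap h(R)$; I must show $a$ is either nilpotent or the sum of a homogeneous unit and a homogeneous nilpotent.

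First I would analyze the graded-regular element $r$. Since $r\in gr(R)$ there is a homogeneous $x$ with $r=rxr$; set $e:=rx$ and $f:=xr$. Both are homogeneous idempotents, so by the remarks in the introduction they lie in $R_e$, i.e. $e,f\in Idem(R_e)=\{0,1\}$. The hypothesis $gr(R)\subseteq Z(R)$ forces $r$ to commute with $x$ (indeed $r$ is central), so $e=rx=xr=f$; thus $e\in\{0,1\}$. If $e=1$ then $rx=1=xr$, so $r\in U(R)$, and $r$ is homogeneous, so $a=r+n$ exhibits $a$ as a homogeneous unit plus a homogeneous nilpotent — done. If $e=0$ then $rx=0$; multiplying $r=rxr$ on the right (or using $r=r(xr)=re\cdot$ something) gives $r = r\cdot xr$, and since $rx=0$ and $r$ is central we get $r = (rx)r = 0$. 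Hence $r=0$, so $a=n\in N(R)$ is homogeneous nilpotent — again $a$ is nil-good.

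Let me spell out the $e=0$ case more carefully, since that is where I expect the only real friction. From $r=rxr$ and centrality of $r$: $r = rxr = x r^2$. Also $r = rxr$; multiply on the left by $x$: $xr = xrxr = (xr)^2$, consistent with $f=xr$ idempotent. The clean kill is: $r = rxr = (rx)r$, and $rx = e = 0$, so $r = 0\cdot r = 0$. Thus in both cases $a$ is graded nil-good, and since $a$ was an arbitrary homogeneous element, $R$ is graded nil-good.

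The main obstacle is simply verifying that the centrality hypothesis genuinely collapses the two idempotents $rx$ and $xr$ to a single element of $Idem(R_e)$ and that $Idem(R_e)=\{0,1\}$ then leaves only the two clean alternatives; everything else is a short computation. One should double-check that $rx$ is indeed homogeneous of degree $e$ — it is, because $r\in R_g$ forces $x\in R_{g^{-1}}$ (the sum $\sum R_g$ being direct, as noted for graded-regular elements in the introduction), so $rx\in R_{gg^{-1}}=R_e$ — which is what licenses invoking $Idem(R_e)=\{0,1\}$ rather than a hypothetical $Idem(R)$.
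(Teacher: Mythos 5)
Your proposal is correct and follows essentially the same route as the paper: write $a=r+n$, observe that $rx\in Idem(R_e)=\{0,1\}$ and use centrality of $r$ to conclude $r=0$ or $r\in U(R)$. Your version merely spells out the two cases (and the homogeneity of $rx$) in more detail than the paper does, and makes explicit the easy converse that the paper leaves to the earlier containment remarks.
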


\begin{proof}
    Suppose that $R$ is graded NR-clean and let $x\in R_g$ where $g\in G$. We can write $x=r+n$ for some $r\in gr(R)\cap R_g$ and $n \in N(R)\cap R_g$. We know that there exists $a \in R_{g^{-1}}$ such that $r=rar$. Since, $ra\in Idem(R_e)$ and $r\in Z(R)$ then $r=0$ or $r\in U(R)$. Thus, $x$ is either nilpotent or a sum of a homogeneous unit and a homogeneous nilpotent; which means $x$ is graded nil-good. Therefore, $R$ is graded nil-good.
\end{proof}


\begin{lemma}\label{l1}
Let $R=\oplus_{g\in G} R_g$ be a graded ring and $I$ a graded-nil ideal of $R$. Then, every graded-regular element modulo $I$ can be lifted to a graded-regular element of $R$.
\end{lemma}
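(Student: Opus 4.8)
The plan is first to unwind the statement. Let $\overline r \in (R/I)_g$ be graded-regular; choose a homogeneous representative $r \in R_g$ and, using that $\overline r$ is graded-regular in $R/I$, an element $a \in R_{g^{-1}}$ with $r - rar \in I$ (this difference automatically lies in $I\cap R_g$). I must exhibit $s\in R_g$ with $s-r\in I$ and $s\in gr(R)$.

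The engine of the proof is idempotent lifting in the identity component. I would first note that $ar\in R_e$ and $(ar)^2 = a(rar) \equiv ar \pmod I$, so $ar$ is idempotent modulo $I\cap R_e$. Since $I$ is graded-nil, every homogeneous element of $I$ is nilpotent, so $I\cap R_e$ is a nil ideal of the ring $R_e$; because idempotents lift modulo nil ideals and $ar\in R_e$ (the standard lifting of $ar$ stays inside the subring generated by $ar$, hence inside $R_e$), there is a homogeneous idempotent $f\in R_e$ with $\overline f = \overline a\,\overline r$.

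Then I would pass to the corner ring $fRf$ at the homogeneous idempotent $f$, which is a $G$-graded ring with unity $f$ and $g$-component $fR_gf$. The element $farf$ lies in $fR_ef$ and $\overline{farf} = \overline f\,\overline a\,\overline r\,\overline f = \overline f^{\,3} = \overline f$, so $f - farf$ is a homogeneous nilpotent of $fRf$; hence $farf$ is a homogeneous unit of $fRf$ with inverse $w := \sum_{k\ge 0}(f-farf)^k \in fR_ef$. Setting $s := rf \in R_g$ and $b := wfa \in R_{g^{-1}}$, one computes $bs = (wfa)(rf) = w\,(farf) = f$, whence $sbs = s(bs) = sf = rf = s$; since $b$ is homogeneous, $s\in gr(R)$. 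Finally $\overline s = \overline r\,\overline f = \overline r\,\overline a\,\overline r = \overline r$, so $s$ is the desired graded-regular lift.

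The only genuinely delicate point is keeping everything homogeneous: one needs the lifted idempotent to live in $R_e$ (so that it is honestly a homogeneous idempotent of $R$), and one needs $w$ to be produced as a homogeneous unit of the graded corner $fRf$ rather than merely a unit of some ungraded subring — both are handled by the degree bookkeeping above. Everything else is a routine computation.
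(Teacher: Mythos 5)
Your proof is correct. Both you and the paper reduce the problem to lifting an idempotent modulo the nil ideal $I\cap R_e$ of $R_e$ (you lift the ``right'' idempotent $ar$, the paper lifts the ``left'' idempotent $xy$), and both then perturb the given element by the lifted idempotent to produce the graded-regular representative; your degree bookkeeping ($f\in R_e$, $s=rf\in R_g$, $b=wfa\in R_{g^{-1}}$) is sound. Where you genuinely diverge is in the second step. The paper invokes the refined lifting statement of Gupta--Khurana (Lemma 2.3 of that reference), which produces an idempotent $f=rxy\in R_e\,xy$ with $1-f\in R_e(1-xy)$; this extra control lets the paper define $z=frx$ and verify both $zyz=z$ and $z-x=(fr-s)(x-xyx)\in I$ by a short direct computation, with the original $y$ serving as the quasi-inverse. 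You instead use only ordinary idempotent lifting modulo a nil ideal, and compensate by working in the graded corner ring $fRf$: since $farf$ differs from the unity $f$ by a homogeneous nilpotent of $I$, it is invertible there, and its inverse $w$ (a finite geometric series, hence in $fR_ef$) supplies the quasi-inverse $b=wfa$ for $s=rf$. Your route is more self-contained --- it avoids citing the stronger lifting lemma --- at the cost of the corner-ring unit argument; the paper's route is computationally shorter once the stronger lemma is granted. Both establish that the lift is congruent to the original element modulo $I$ for the same underlying reason, namely $x-xyx\in I$ (in your notation $r-rar\in I$).
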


\begin{proof}
    Let $x\in R_g$ be a graded-regular element modulo $I$, where $g\in G$. Hence, there exists $y\in R_{g^{-1}}$ such that $\overline{x}=\overline{x}\overline{y}\overline{x} $. Therefore, we have that $\overline{xy}=\overline{xy}\overline{xy}$ in the $G$-graded ring $R/I$; thus, $\overline{xy}$ is an idempotent element of the ring $R_e/I_e$ (where $I_e=I\cap R_e$). Since $I_e$ is a nil ideal of the ring $R_e$, then according to \cite{5} $I_e$  is a lifting ideal in the ring $R_e$. Now, if we apply Lemma 2.3 in \cite{4} to the idempotent element $xy \in R_e$, there exists an idempotent $f \in R_exy$ such that $1-f\in R_e(1-xy)$. Hence, we can write $f=rxy$ and $1-f=s(1-xy)$, where $r,s\in R_e$. Let $z:=frx \in R_g$; we have then $zy=frxy=(rxy)(rxy)=f^2=f$. Therefore, $zyz=fz=f^2rx=frx=z$; then $z$ is a graded-regular element of $R$. On the other hand, we have
    \begin{align*} 
  z-x &= z(1-yx)-(1-zy)x\\&= frx(1-xy)-(1-f)x\\ 
   &= frx(1-xy)-s(1-xy)x\\&=(fr-s)(x-xyx) \in I.
   \end{align*}
    Therefore, $\overline{x}=\overline{z}$ in the ring $R/I$; hence, $x$ is lifted to the graded-regular element $z$. 
     \end{proof}

\begin{thm}\label{t1}
Let $I$ be a graded-nil ideal of a $G$-graded ring $R$.Then, $R$ is graded NR-clean if and only if $R/I$ is graded NR-clean.
\end{thm}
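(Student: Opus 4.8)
The plan is to prove the two directions separately, exploiting the fact that $I$ being graded-nil implies that its $e$-component $I_e = I \cap R_e$ is a nil ideal of $R_e$, and more importantly that every homogeneous element of $I$ is nilpotent, so nilpotency can be transferred both ways along the quotient map $\pi : R \to R/I$.

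For the forward direction, suppose $R$ is graded NR-clean. Since $\pi$ is a degree-preserving epimorphism and $R/I$ is $G$-graded, I would simply invoke Proposition~\ref{p1}(2): every graded-homomorphic image of a graded NR-clean ring is graded NR-clean. So this direction is immediate.

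For the converse, suppose $R/I$ is graded NR-clean and take a homogeneous element $a \in R_g$. Its image $\overline{a}$ is homogeneous of degree $g$ in $R/I$, so we may write $\overline{a} = \overline{r} + \overline{n}$ with $\overline{r} \in gr(R/I)$ homogeneous of degree $g$ and $\overline{n} \in N(R/I) \cap h(R/I)$ homogeneous of degree $g$. Lift $\overline{n}$ to some homogeneous $n_0 \in R_g$; then $n_0$ need not be nilpotent, but $\overline{n_0^k} = \overline{0}$ for some $k$, i.e.\ $n_0^k \in I$, and since $n_0^k$ is homogeneous and $I$ is graded-nil, $n_0^k$ is nilpotent, hence $n_0$ is nilpotent. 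So we get a genuine homogeneous nilpotent $n_0 \in R_g$ lifting $\overline{n}$. Next, by Lemma~\ref{l1}, the graded-regular element $\overline{r} = \overline{a - n_0}$ (note $\overline{a - n_0} = \overline{a} - \overline{n} = \overline{r}$) lifts to a graded-regular element $z \in R_g$ with $\overline{z} = \overline{a - n_0}$. Then $a - n_0 - z \in I$ is homogeneous of degree $g$, hence nilpotent; call it $m$. Now I would write $a = z + (m + n_0)$ and argue that $m + n_0$ is a homogeneous nilpotent — this is the step requiring care, since the sum of two nilpotents need not be nilpotent in general.

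The main obstacle is precisely showing $m + n_0 \in N(R)$. The cleanest fix is to absorb the correction term into the nilpotent at the level of Lemma~\ref{l1}: rather than lifting $\overline{r}$ to an arbitrary graded-regular $z$, note that the proof of Lemma~\ref{l1} produces $z$ with $z - (a - n_0) = (fr - s)((a-n_0) - (a-n_0)y(a-n_0)) \in I$; but what we actually want is a decomposition $a = z + (\text{nilpotent})$. An alternative and more robust route: set $b := a - n_0$, which is homogeneous of degree $g$ with $\overline{b} = \overline{r}$ graded-regular mod $I$; apply Lemma~\ref{l1} to $b$ to get a graded-regular $z \in R_g$ with $z - b \in I$; then $a = z + (n_0 + (b - z))$ where $b - z \in I$ is homogeneous, hence nilpotent. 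So it suffices to show $n_0 + (b-z)$ is nilpotent. Here I would use that $b - z \in I$ and that $I$ is a two-sided ideal with every homogeneous element nilpotent: working in $R_g$, the element $n_0$ is nilpotent, $b - z$ lies in the homogeneous ideal $I$, and I would invoke a standard lemma (e.g.\ that if $t$ is nilpotent and $c$ lies in a nil ideal then $t + c$ is nilpotent — true because $R_e + I$-type arguments, or more directly: in any ring, the sum of a nilpotent element and an element of a nil \emph{ideal} is nilpotent, since modulo the nil ideal $t+c$ is nilpotent, so $(t+c)^k \in I$ is homogeneous hence nilpotent). This gives $n_0 + (b - z) \in N(R) \cap h(R)$, completing the decomposition $a = z + \big(n_0 + (b-z)\big)$ and hence showing $R$ is graded NR-clean. $\blacksquare$
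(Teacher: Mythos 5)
Your proof is correct, and both directions rest on the same two pillars the paper uses: Proposition~\ref{p1}(2) for the forward implication, and Lemma~\ref{l1} plus the observation that a homogeneous element which is nilpotent modulo a graded-nil ideal is itself nilpotent (its $k$-th power lands in $I$ and is homogeneous, hence nilpotent) for the converse. The one substantive difference is organizational, and it is worth noticing: the ``main obstacle'' you identify is self-inflicted. The paper never lifts $\overline{n}$ separately; it lifts only $\overline{r}$ to some $r\in gr(R)\cap R_g$ via Lemma~\ref{l1} and then takes the nilpotent part to be $a-r$ in one piece --- this element is homogeneous of degree $g$ and nilpotent modulo $I$, hence nilpotent, and there is never a sum of two nilpotents to control. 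In your version, note that $n_0+(b-z)=n_0+(a-n_0)-z=a-z$, so the element you labor to show is nilpotent is exactly ``$a$ minus a graded-regular lift,'' and your closing argument for it is precisely the paper's one-line argument applied to $a-r$. So the detour through lifting $n_0$ (and the worry about adding nilpotents) can be deleted wholesale without loss; what remains is the paper's proof.
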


\begin{proof}
    $(\Longrightarrow)$ Suppose that $R$ is graded NR-clean. Clearly the canonical epimorphism of rings $\pi:R \longrightarrow R/I$ is a graded-homomorphism. Hence, according to Proposition \ref{p1}, $R/I$ is graded NR-clean as a graded-homomorphic image of $R$.

    $(\Longleftarrow)$ Suppose that $R/I$ is graded NR-clean, and let $a\in R_g$ where $R_g$ is the $g$-th component of $R$ and $g\in G$. There exist then $\overline{r}\in gr(R/I) $ and $\overline{n} \in N(R/I)$, such that $\overline{a}=\overline{r}+\overline{n}$. Now, according to Lemma \ref{l1} we may assume that $r\in gr(R)$. In addition, we have $\overline{a-r}=\overline{n} \in N(R/I)$. Hence, $a-r$ is a homogeneous nilpotent element modulo the graded-nil ideal $I$; thus, $a-r\in N(R)\cap h(R)$. Consequently, $R$ is graded NR-clean.
\end{proof}

Let us notice that, the equivalence in Theorem \ref{t1} doesn't hold true if the ideal is not graded-nil. Indeed, a counterexample is given in the next example.

\begin{ex}\label{e4}
    Let $G=\{e,g\}$ be a cyclic group of order two, and $R=M_2(\mathbb{Z})$ the ring of $2\times 2$ matrices with integer coefficients. Clearly $R$ is $G$-graded where its components are: $R_e= \begin{pmatrix} \mathbb{Z} & 0 \\ 0 & \mathbb{Z} \end{pmatrix}$ and $R_g=\begin{pmatrix} 0 & \mathbb{Z} \\ \mathbb{Z} & 0 \end{pmatrix}$. Now, we have $R_e\cong \mathbb{Z}^2$ which is not an NR-clean ring (see Proposition 6 in \cite{1}). Thus, according to Proposition \ref{p1}, $R$ is not graded NR-clean. On the other hand, let's consider the homogeneous ideal $I=M_2(3\mathbb{Z})$ of $R$. We have $I$ is not graded-nil, and $R/I \cong \begin{pmatrix} \mathbb{Z}_3 & 0 \\ 0 & \mathbb{Z}_3 \end{pmatrix} \oplus \begin{pmatrix} 0 & \mathbb{Z}_3 \\ \mathbb{Z}_3 & 0 \end{pmatrix} $ which is a $G$-graded ring. Since, the field $\mathbb{Z}_3$ is an NR-clean ring, then according to Proposition 6 in \cite{1}, $\mathbb{Z}_3\times \mathbb{Z}_3$ is NR-clean; and therefore $(R/I)_e\cong \mathbb{Z}_3\times \mathbb{Z}_3 $ is an NR-clean ring. In addition, every element of $(R/I)_g$ is either nilpotent or a unit (in particular graded-regular). Thus, $R/I$ is graded NR-clean.
\end{ex}


We  recall that  the  \textit{support} of a $G$-graded ring $R=\oplus_{g\in G}R_g$, is the set $sup(R)=\{g\in G, R_g \neq 0\}$.

\begin{prop}
    Let $R=\oplus_{g\in G}R_g$ be a graded NR-clean ring of finite support. Then, $J^g(R)$ is graded-nil.
\end{prop}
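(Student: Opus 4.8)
The plan is to transfer the question to the identity component $R_e$ and then propagate upward using the finite support. First I would establish the inclusion $J^g(R)\cap R_e\subseteq J(R_e)$: given $y\in J^g(R)\cap R_e$ and $a\in R_e$, the element $1-ay$ is homogeneous of degree $e$, so the right ideal $(1-ay)R$ is homogeneous; were it proper it would sit inside a graded-maximal right ideal $M$ (Zorn, using that $1$ is homogeneous, hence lies in no proper homogeneous right ideal), and then $ay\in J^g(R)\subseteq M$ would force $1\in M$, a contradiction. Hence $(1-ay)R=R$, and since the inverse of a degree-$e$ unit is again of degree $e$, $1-ay\in U(R_e)$; as $a$ was arbitrary, $y\in J(R_e)$. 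By Proposition \ref{p1}, $R_e$ is NR-clean, and an NR-clean ring has nil Jacobson radical (\cite{1}); therefore $J^g(R)\cap R_e$ is a nil ideal.

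Next I would use finiteness of $sup(R)$ to reach every homogeneous element of $J^g(R)$. Let $x\in R_g\cap J^g(R)$. If $g$ has infinite order, the powers $g,g^{2},g^{3},\dots$ are pairwise distinct, so finiteness of $sup(R)$ forces $R_{g^{k}}=0$ for some $k$; since $x^{k}\in R_{g^{k}}$ we get $x^{k}=0$. If $g$ has finite order $d$ (in particular if $g=e$), then $x^{d}\in R_{g^{d}}=R_e$, and since $J^g(R)$ is a two-sided homogeneous ideal, $x^{d}\in J^g(R)\cap R_e$, which is nil; hence $x^{dt}=(x^{d})^{t}=0$ for some $t$. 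In either case $x$ is nilpotent, and as every homogeneous element of $J^g(R)$ lies in some $R_g\cap J^g(R)$, this shows $J^g(R)$ is graded-nil.

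I expect the main obstacle to lie in the reduction step rather than the bootstrapping. Two inputs carry the argument: (i) the inclusion $J^g(R)\cap R_e\subseteq J(R_e)$, a genuinely graded fact resting on the facts that homogeneously generated one-sided ideals are homogeneous and that proper homogeneous right ideals extend to graded-maximal ones; and (ii) that an NR-clean ring has nil Jacobson radical, a property of ungraded NR-clean rings that I would invoke from \cite{1} rather than reprove here. It is also worth pinning down where the hypothesis is used: finiteness of $sup(R)$ enters only in the dichotomy on the order of $g$, and it is indispensable there, since for an infinite support a homogeneous element of $J^g(R)$ whose degree has infinite order need not possess a vanishing power.
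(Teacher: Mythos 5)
Your argument is genuinely different from the paper's. The paper works directly with a homogeneous $a\in J^g(R)$: it writes $a=r+n$ with $r\in gr(R)$ and $n$ homogeneous nilpotent, invokes Lemma 3.1 of \cite{66} to place $n$ (hence $r$) in $J^g(R)$, uses the finite support to get $J^g(R)\subseteq J(R)$ (Corollary 2.9.4 of \cite{6}), and then kills the regular part outright: from $r=rxr$ one gets $r(1-xr)=0$ with $1-xr\in U(R)$ because $xr\in J(R)$, so $r=0$ and $a=n$. Your architecture --- reduce to $R_e$, then climb back up via the order of the degree --- is a reasonable alternative, and the bootstrapping step is correct (some power of $x\in R_g\cap J^g(R)$ lands in $R_{g^k}=0$ when $g$ has infinite order, or in $J^g(R)\cap R_e$ when $g$ has finite order). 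Note, though, that the two proofs use the finite-support hypothesis in different places: the paper needs it to get $J^g(R)\subseteq J(R)$, whereas you need it only for degrees of infinite order.

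The step that needs scrutiny is your ingredient (ii), ``an NR-clean ring has nil Jacobson radical,'' which you cite from \cite{1} without proof. This is not a formality. The Diesl-style argument that yields it for nil clean rings (the idempotent part of $j\in J(R)$ becomes an idempotent nilpotent, hence zero, in $R/J(R)$, and an idempotent lying in $J(R)$ vanishes) does not transfer: a regular element of a ring can be nilpotent without being zero (e.g.\ $E_{12}$ in $M_2(k)$), so from $\overline{r}=-\overline{n}$ in $R/J(R)$ one cannot conclude that the regular part $r$ lies in $J(R)$. Unless you have verified that \cite{1} actually contains this statement, you should close this step yourself; the cleanest repair is to borrow the paper's trick for elements $a\in J^g(R)\cap R_e$: writing $a=r+n$, Lemma 3.1 of \cite{66} and finite support put $r$ in $J(R)$, and a regular element of $J(R)$ is zero, so $a=n$ is nilpotent --- after which your dichotomy on the order of $g$ finishes the proof. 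A smaller point: in your verification of $J^g(R)\cap R_e\subseteq J(R_e)$, the equality $(1-ay)R=R$ only produces a right inverse of $1-ay$; upgrading to a genuine unit requires the usual quasi-regularity iteration (the right inverse equals $1$ plus an element of $J^g(R)\cap R_e$, hence is itself right invertible). That inclusion is in any case standard and is contained in the material of \cite{6} around Proposition 2.9.1.
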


\begin{proof}
    Let $a\in J^g(R)$ be a homogeneous element. Let us choose $r\in gr(R)$ and $n \in N(R)\cap h(R)$ such that $a=r+n$. According to Lemma 3.1 in \cite{66} we have $n\in J^g(R)$; therefore $r\in J^g(R)$. Now, since the support of $R$ is finite, then by Corollary 2.9.4 in \cite{6} we have $J^g(R) \subseteq J(R)$. On the other hand, there exists $x \in h(R)$ such that $r=rxr$; hence $r(1-xr)=0$. In addition, since $r\in J(R)$, then we have $1-xr \in U(R)$; thus $r=0$. Consequently, $a=n \in N(R)$, which proves that  $J^g(R)$ is graded-nil.
\end{proof}

\begin{cor}
Let $R$ be a graded ring of finite support. Then, $R$ is graded NR-clean if and only if $R/J^g(R)$ is graded NR-clean.
\end{cor}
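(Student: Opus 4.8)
The plan is to obtain the corollary as a direct consequence of Theorem~\ref{t1} applied to the graded-nil ideal $I=J^g(R)$; everything hinges on knowing that $J^g(R)$ is graded-nil, and this is where the finite-support hypothesis must be used.

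First the forward implication. Suppose $R$ is graded NR-clean. Since $R$ has finite support, the preceding proposition shows that $J^g(R)$ is graded-nil, and recall that $J^g(R)$ is in any case a homogeneous two-sided ideal. Applying Theorem~\ref{t1} with $I=J^g(R)$ then yields at once that $R/J^g(R)$ is graded NR-clean; this half is routine.

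For the converse, suppose $R/J^g(R)$ is graded NR-clean. Again the natural instrument is Theorem~\ref{t1}: as soon as we know that $J^g(R)$ is graded-nil, the quotient being graded NR-clean forces $R$ to be graded NR-clean. So the entire difficulty is concentrated in establishing that a finite-support graded ring $R$ with $R/J^g(R)$ graded NR-clean has $J^g(R)$ graded-nil. The approach I would take is to first invoke Corollary~2.9.4 in~\cite{6} to get $J^g(R)\subseteq J(R)$, and then argue in the spirit of the preceding proposition: for a homogeneous $a\in J^g(R)$, choose a graded NR-clean decomposition $\overline a=\overline r+\overline n$ in $R/J^g(R)$, lift $\overline r$ to a graded-regular $r\in R$ via Lemma~\ref{l1}, and use $\overline a=0$ together with the fact that a graded-regular element lying in $J(R)$ must be $0$ to conclude that $a$ is nilpotent. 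The step I expect to be the main obstacle is exactly this one: Lemma~\ref{l1} lifts graded-regular elements only modulo a graded-nil ideal — which is precisely what one is trying to establish — so the lifting must be organized so as to avoid circular reasoning. Once $J^g(R)$ is known to be graded-nil, a single application of Theorem~\ref{t1} closes both directions simultaneously.
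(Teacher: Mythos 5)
Your forward direction is exactly what the paper intends (it gives no explicit proof of this corollary): the preceding proposition shows that a graded NR-clean ring of finite support has $J^g(R)$ graded-nil, and Theorem~\ref{t1} then hands you the quotient. That half is complete and correct.

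The converse is where your proposal stops short, and you are right to flag the circularity — but the situation is worse than a lifting argument that needs to be reorganized: the gap cannot be closed, because the implication you are trying to prove is false. The hypotheses ``$R$ has finite support and $R/J^g(R)$ is graded NR-clean'' do not force $J^g(R)$ to be graded-nil, and without that Theorem~\ref{t1} (and Lemma~\ref{l1}) are simply unavailable. Concretely, let $R=\mathbb{Z}_{(p)}$, the localization of $\mathbb{Z}$ at a prime $p$, with the trivial grading (support $\{e\}$, so the finite-support hypothesis holds and every ideal is homogeneous). Then $J^g(R)=J(R)=p\mathbb{Z}_{(p)}$ is not nil, and $R/J^g(R)$ is the field with $p$ elements, which is (graded) NR-clean since every element is $0$ or a unit. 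But $R$ itself is not NR-clean: in a commutative domain the only nilpotent is $0$ and an element $r$ with $r=rar$ satisfies $r(1-ar)=0$, hence is $0$ or a unit, so an NR-clean commutative domain must be a field; by the first part of Proposition~\ref{p1}, $R$ is therefore not graded NR-clean either. So the corollary as stated only holds in the forward direction; the correct two-sided statement is that $R$ is graded NR-clean if and only if $J^g(R)$ is graded-nil \emph{and} $R/J^g(R)$ is graded NR-clean. In short: your completed half is fine, your diagnosis of the obstruction in the other half is accurate, and the promise that the obstruction can be circumvented is the one claim in your proposal that cannot be kept.
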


\section{Extensions of graded NR-clean rings}
This section is dedicated to the investigation of some extensions of graded NR-clean rings. We  establish in the first subsection, the main result (Theorem \ref{t3}) of this section, concerning the matrix ring. Then, in the second subsection we will discuss the behaviour of graded NR-clean property in trivial ring extensions and group rings as well.

\subsection{Matrix rings}

\begin{thm}

Suppose that $G$ is an Abelian group, and let $R$ be a $G$-graded ring and $n$ a natural number. Then, $R$ is graded NR-clean if and only if $T_n(R)(\sigma)$ is graded NR-clean for every $\sigma \in G^n$.
\end{thm}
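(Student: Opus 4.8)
The plan is to exhibit a graded-nil ideal of $T_n(R)(\sigma)$ whose quotient is graded-isomorphic to a finite product of copies of $R$, and then to combine Theorem \ref{t1} with Proposition \ref{p3}.

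Fix $\sigma=(h_1,\dots,h_n)\in G^n$ and write $T:=T_n(R)(\sigma)$ for the ring of upper triangular matrices with the grading induced from $M_n(R)(\sigma)$, so that $T\cap M_n(R)_g(\sigma)$ consists of the upper triangular matrices whose $(i,j)$-entry lies in $R_{h_igh_j^{-1}}$. Summing over $g$ recovers all of $T$, so this is indeed a $G$-grading on $T$. First I would let $I$ be the set of strictly upper triangular matrices in $T$; the same description shows $I=\bigoplus_{g\in G}(I\cap T_g)$, so $I$ is a homogeneous two-sided ideal, and since every homogeneous element of $I$ is a strictly upper triangular matrix we have $x^n=0$ for each such $x$; hence $I$ is graded-nil.

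The key step --- and the one that uses the hypothesis that $G$ is Abelian --- is identifying $T/I$. Because $G$ is Abelian, $h_igh_i^{-1}=g$ for all $i$ and all $g$, so the $g$-component of the diagonal is $\bigoplus_{i=1}^n R_g$, independently of $\sigma$. The map sending the class of an upper triangular matrix to its diagonal $(a_{11},\dots,a_{nn})$ is then a surjective ring homomorphism $T\to \prod_{i=1}^n R$ with kernel $I$, and by the previous remark it is degree-preserving when $\prod_{i=1}^n R$ carries the product grading of Proposition \ref{p3}; thus $T/I\cong\prod_{i=1}^n R$ as graded rings.

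Finally I would put the pieces together. By Theorem \ref{t1}, $T$ is graded NR-clean if and only if $T/I$ is; by the above isomorphism and Proposition \ref{p3} (with all factors equal to $R$), $T/I$ is graded NR-clean if and only if $R$ is graded NR-clean. Since this chain of equivalences is valid for every fixed $\sigma\in G^n$, we conclude that $R$ is graded NR-clean if and only if $T_n(R)(\sigma)$ is graded NR-clean for all $\sigma$ (for the converse direction it suffices, in fact, to test a single $\sigma$, e.g.\ $\sigma=(e,\dots,e)$). I expect the only real subtlety to be the bookkeeping with the $\sigma$-twisted grading in the middle two paragraphs; commutativity of $G$ is exactly what collapses the twist $R_{h_igh_i^{-1}}$ on the diagonal back to the plain $R_g$, so that the quotient carries the honest product grading rather than a permuted or conjugated variant.
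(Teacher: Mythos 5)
Your proof is correct and follows essentially the same route as the paper: the strictly upper triangular matrices form a graded-nil (indeed nilpotent) homogeneous ideal, the diagonal map gives a degree-preserving isomorphism $T_n(R)(\sigma)/I\cong R^n$ precisely because $G$ Abelian collapses $R_{h_igh_i^{-1}}$ to $R_g$, and then Theorem \ref{t1} together with the product result finishes the argument. The only (harmless) difference is in the converse: you simply run the same chain of equivalences backwards, whereas the paper separately projects $T_n(R)(e,\dots,e)$ onto its $(1,1)$-corner; your version is if anything slightly cleaner, since both Theorem \ref{t1} and Proposition \ref{p3} are already stated as equivalences.
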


\begin{proof}
    "$\Longrightarrow$" Suppose that $R$ is graded NR-clean, and let $\sigma \in G^n$. Let's consider the homogeneous ideal $I$ of the graded ring $T_n(R)(\sigma)$, consisting of matrices with zeros in the main diagonal. It is well known that $I$ is a nilpotent ideal of $T_n(R)(\sigma)$; hence, in particular it is graded-nil. Now, we consider the epimorphism $\phi: T_n(R)(\sigma) \longrightarrow R^n$ defined by: $\phi((a_{ij})_{n\times n })=(a_{11},a_{22},\dots,a_{nn})$.Now, let $A_g\in T_n(R)_g(\sigma)$ where $g\in G$; since $G$ is an Abelian group, then the main diagonal elements of $A_g$ are all from $R_g$. Therefore, $\phi(T_n(R)_g(\sigma)) \subseteq (R^n)_g$ for all $g\in G$, where $(R^n)_g$ is the $g$-th component of the $G$-graded ring $R^n$. Thus,  $\phi$ is a degree-preserving homomorphism. On the other hand, we can easily verify that the kernel of $\phi$ is exactly the ideal $I$. Therefore, $T_n(R)(\sigma)/I$ is a graded-isomorphic image of $R^n$. According to  Proposition \ref{p1}, $R^n$ is graded NR-clean. Thus, according to Theorem \ref{t1} $T_n(R)(\sigma)$ is graded NR-clean.

    "$\Longleftarrow$" Assume that $T_n(R)(\sigma)$ is graded NR-clean for every $\sigma \in G^n$. Let $\sigma=(e,e,\dots,e)$, and let's consider the degree-preserving epimorphism $\epsilon:T_n(R)(\sigma) \longrightarrow R$  defined by: $\epsilon((a_{ij})_{n\times n })=a_{11}$. According to Lemma 4.1 in \cite{66} we have $\ker(\epsilon)$ is a homogeneous ideal of the $G$-graded ring $T_n(R)(\sigma)$. Therefore, $R$ and $T_n(R)(\sigma)/\ker(\epsilon)$ are graded-isomorphic. In addition, according Theorem \ref{t1},  $T_n(R)(\sigma)/\ker(\epsilon)$ is graded NR-clean; hence $R$ is graded NR-clean.
    \end{proof}

In order to establish the main result of this section, we need the two following lemmas which characterise graded U-nil clean rings using central homogeneous idempotents.

\begin{lemma}\label{l2}
    Let $R=\oplus_{g\in G} R_g$ be G-graded  ring and $f$ a central homogeneous idempotent of $R$. We have, $R$ is graded NR-clean If and only if $fR$ and $(1-f)R$ are $G$-graded NR-clean rings.
\end{lemma}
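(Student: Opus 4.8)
The plan is to exploit the ring decomposition $R \cong fR \times (1-f)R$ induced by the central homogeneous idempotent $f$, and to check that this decomposition is compatible with the gradings, so that Proposition \ref{p3} applies. First I would recall that since $f$ is a central homogeneous idempotent, it lies in $R_e$ (homogeneous idempotents come from the identity component), and both $fR$ and $(1-f)R$ are $G$-graded rings with $g$-th components $(fR)_g = fR_g$ and $((1-f)R)_g = (1-f)R_g$, as noted in the introduction. The map $R \to fR \times (1-f)R$, $x \mapsto (fx, (1-f)x)$, is a ring isomorphism; I would verify it is degree-preserving, i.e. it sends $R_g$ into $(fR)_g \oplus ((1-f)R)_g$, which is immediate since $fx$ and $(1-f)x$ have degree $g$ when $x \in R_g$ and $f \in R_e$. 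Hence $R$ and $fR \times (1-f)R$ are graded-isomorphic.

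Given this graded isomorphism, Proposition \ref{p3} immediately yields that $R$ is graded NR-clean if and only if both $fR$ and $(1-f)R$ are graded NR-clean, which is exactly the claim. So at the structural level the lemma is essentially a corollary of Proposition \ref{p3} once the grading bookkeeping is in place.

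Alternatively, if one prefers to argue directly without invoking the product decomposition, I would proceed as follows. For the forward direction, suppose $R$ is graded NR-clean and take a homogeneous $x \in (fR)_g = fR_g$. Viewing $x$ as an element of $R_g$, write $x = r + n$ with $r \in gr(R) \cap R_g$ and $n \in N(R) \cap R_g$. Then $fx = x = fr + fn$; since $f$ is central, $fr$ is graded-regular in $fR$ (if $r = rar$ with $a \in R_{g^{-1}}$, then $fr = (fr)(fa)(fr)$ and $fa \in (fR)_{g^{-1}}$) and $fn$ is nilpotent and homogeneous in $fR$. Symmetrically for $(1-f)R$. For the converse, given homogeneous $x \in R_g$, decompose $x = fx + (1-f)x$; applying the graded NR-clean property in $fR$ to $fx$ and in $(1-f)R$ to $(1-f)x$ gives $fx = r_1 + n_1$ and $(1-f)x = r_2 + n_2$ with the obvious homogeneity and regularity/nilpotency conditions. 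Then $x = (r_1 + r_2) + (n_1 + n_2)$; one checks $r_1 + r_2 \in gr(R)$ using orthogonality of $f$ and $1-f$ and centrality (so a common witness $a_1 + a_2$ works, with cross terms vanishing), and $n_1 + n_2$ is nilpotent because $n_1 n_2 = n_2 n_1 = 0$ and each is nilpotent, while homogeneity of degree $g$ is clear.

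I do not expect a genuine obstacle here; the only point requiring care is the verification that the regular witnesses combine correctly, i.e. that $fa$ lands in the correct homogeneous component of $fR$ and that orthogonality kills the cross terms in both the regularity relation and the nilpotency estimate. Both are routine given that $f \in R_e$ is central. The cleanest writeup is the first one: establish the degree-preserving ring isomorphism $R \cong fR \times (1-f)R$ and quote Proposition \ref{p3}.
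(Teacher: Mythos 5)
Your proposal is correct and rests on the same decomposition the paper uses: $R\cong fR\times(1-f)R$ induced by the central homogeneous idempotent $f\in R_e$, with the paper writing this as the diagonal matrix ring $\begin{pmatrix} fR & 0 \\ 0 & (1-f)R\end{pmatrix}$ and carrying out by hand the element-wise combination of graded-regular and nilpotent parts that your second, direct argument reproduces. Your observation that one can instead just quote Proposition \ref{p3} after checking the isomorphism is degree-preserving is a harmless streamlining of the same idea, not a different route.
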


\begin{proof}
Suppose that $fR$ and $(1-f)R$ are graded NR-clean. Clearly $1-f \in Idem(R_e)\cap Z(R)$; hence, peirce decomposition of $R$ is: $R=fR\oplus (1-f)R \cong \begin{pmatrix} fR & 0 \\ 0 & (1-f)R \end{pmatrix}$. Indeed, the ring $S=\begin{pmatrix} fR & 0 \\ 0 & (1-f)R \end{pmatrix}$ whose identity is $\begin{pmatrix} f & 0 \\ 0 & (1-f) \end{pmatrix}$, is $G$-graded with the $g$-component $S_g=\begin{pmatrix} fR_g & 0 \\ 0 & (1-f)R_g \end{pmatrix}$ for $g\in G$. Moreover, the rings $R$ and $S$ are graded-isomorphic via the graded-isomorphism $\varphi:S\longrightarrow R$ defined by $\varphi(\begin{pmatrix} fx & 0 \\ 0 & (1-f)y \end{pmatrix})=fx+(1-f)y$ where $x,y\in R$. Now, let $A=\begin{pmatrix} fx & 0 \\ 0 & (1-f)y \end{pmatrix}\in S_g$, where $x,y\in R_g$ and $g\in G$. According to the assumption, there exist $fr_1\in gr(fR)\cap R_g$, and $fn_1 \in N(R)\cap R_g$ such that $fx=fr_1+fn_1$. Moreover, we can choose $(1-f)r_2\in gr((1-f)R)$ and $(1-f)n_2 \in N(R)\cap R_g$ such that $(1-f)y=(1-f)r_2+(1-f)n_2$. Then,
$$A= \begin{pmatrix} fr_1 & 0 \\ 0 & (1-f)r_2 \end{pmatrix}+\begin{pmatrix} fn_1 & 0 \\ 0 & (1-f)n_2 \end{pmatrix}.$$
The previous  decomposition of $A$, is obviously a graded NR-clean decomposition. Thus, $S$ is a graded NR-clean ring, and consequently $R$ is graded NR-clean as well.

Conversely, suppose that $R$ is graded NR-clean. Let $fa\in fR_g$ be a homogeneous element of $fR$ of degree $g\in G$; according to the assumption there exist $r\in gr(R)$ and $n\in N(R)\cap h(R)$ such that $a=r+n$; therefore, $fa=fr+fn$. Now, since $f\in Z(R)$ then $fr\in gr(fR)$ and $fn\in N(fR)$; consequently $fR$ is graded NR-clean. Since $1-f\in Z(R)\cap R_e$ then, similarly $(1-f)R$ is also a graded NR-clean ring

\end{proof}

Using again the general version of peirce decomposition, We obtain the following lemma, whose proof is similar to the one of Lemma \ref{l2}. First, let us recall the definition of orthogonal idempotents. A set of idempotents $\{f_i\}$ is said to be orthogonal if $f_if_j = 0$ for all $i\neq j$.  

\begin{lemma}\label{l4}

Let $f_1,\dots,f_n$ be  orthogonal homogeneous idempotents of a $G$-graded ring $R$ such that, $1=f_1+\dots+f_n$ and $f_i\in Z(R)$ for each $i$.
Then, $R$ is graded NR-clean if and only if
 every $f_iR$ is a graded NR-clean ring. 
\end{lemma}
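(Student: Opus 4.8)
The plan is to reduce Lemma \ref{l4} to an iterated application of Lemma \ref{l2}. The point is that Lemma \ref{l2} already handles the case $n=2$, so the natural strategy is induction on $n$. First I would verify that a partial sum of the $f_i$ is again a central homogeneous idempotent: set $e_k = f_1 + f_2 + \dots + f_k$; since each $f_i$ lies in $Z(R) \cap R_e$ and the $f_i$ are orthogonal, $e_k$ is a homogeneous idempotent (its square is $\sum_{i,j\le k} f_if_j = \sum_{i\le k} f_i = e_k$), it is central, and it lies in $R_e$. In particular $e_{n-1}$ and $f_n = 1 - e_{n-1}$ form a complementary pair of central homogeneous idempotents, so Lemma \ref{l2} gives that $R$ is graded NR-clean if and only if both $e_{n-1}R$ and $f_nR$ are graded NR-clean.

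Next I would set up the induction hypothesis carefully. Inside the ring $R' := e_{n-1}R$, which is itself a $G$-graded ring with unity $e_{n-1}$ and $g$-component $e_{n-1}R_g$, the elements $f_1, \dots, f_{n-1}$ are orthogonal homogeneous idempotents summing to the identity $e_{n-1}$ of $R'$, and each $f_i = e_{n-1}f_i \in Z(R')$ since it was central in $R$. Moreover $f_iR' = f_i e_{n-1} R = f_i R$ because $f_i e_{n-1} = f_i$. So the inductive hypothesis applied to $R'$ says: $e_{n-1}R$ is graded NR-clean if and only if each $f_iR$ ($i = 1, \dots, n-1$) is graded NR-clean. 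Combining this with the $n=2$ step from the previous paragraph yields: $R$ is graded NR-clean $\iff$ ($e_{n-1}R$ graded NR-clean and $f_nR$ graded NR-clean) $\iff$ (each $f_iR$, $i \le n-1$, graded NR-clean and $f_nR$ graded NR-clean) $\iff$ every $f_iR$ is graded NR-clean, which is exactly the claim. The base case $n=1$ is trivial ($f_1 = 1$, $f_1R = R$), and $n = 2$ is literally Lemma \ref{l2}.

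Alternatively, one could mimic the proof of Lemma \ref{l2} directly using the general $n$-fold Peirce decomposition $R \cong \mathrm{diag}(f_1R, \dots, f_nR)$ as a $G$-graded ring via the graded-isomorphism sending a diagonal matrix to $\sum_i f_i x_i$, and then split a homogeneous element componentwise: given a homogeneous element of degree $g$, write each diagonal entry $f_ix_i$ as $f_ir_i + f_in_i$ with $f_ir_i \in gr(f_iR)\cap R_g$ and $f_in_i \in N(R)\cap R_g$ (using centrality of $f_i$ to see that $f_ir_i$ stays graded-regular and $f_in_i$ stays nilpotent), and assemble the two diagonal matrices; conversely multiply a graded NR-clean decomposition in $R$ by each $f_i$. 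This is the route the authors hint at (``Using again the general version of Peirce decomposition''), and it is essentially the same bookkeeping as Lemma \ref{l2} with $n$ blocks instead of two.

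The main obstacle is not conceptual but organizational: one must be careful that the idempotents remain central \emph{and homogeneous of degree $e$} when passing to subrings $e_kR$, that $f_iR$ computed inside $e_{n-1}R$ coincides with $f_iR$ computed inside $R$ (which is where $f_i e_{n-1} = f_i$ is used), and that ``graded-regular'' and ``homogeneous nilpotent'' are preserved under multiplication by a central homogeneous idempotent — all of which are straightforward but need to be stated. Since the induction reduces everything to Lemma \ref{l2}, no genuinely new difficulty arises, so I would present the short inductive argument rather than re-derive the block decomposition.
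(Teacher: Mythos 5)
Your proposal is correct, and it matches the paper, which gives no written proof at all for Lemma \ref{l4} beyond the remark that it follows from the general ($n$-block) Peirce decomposition ``similarly to Lemma \ref{l2}'' --- exactly the second route you describe. Your primary route, induction on $n$ via the central homogeneous idempotent $e_{n-1}=f_1+\dots+f_{n-1}$ and the identification $f_i(e_{n-1}R)=f_iR$, is a sound and arguably tidier way to organize the same reduction to Lemma \ref{l2}.
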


\begin{thm}\label{t3}
    
Let $R$ be a $G$-graded ring. If $R$ is graded NR-clean, then $M_n(R)(\sigma)$ is graded NR-clean for any natural number $n$ and every $\sigma \in G^n$.
\end{thm}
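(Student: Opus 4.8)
The plan is to argue by induction on $n$, reducing $M_n(R)(\sigma)$ to its Peirce corners with respect to the matrix-unit idempotent $E_{11}$, and then to pass from "both corners graded NR-clean" back to "the whole ring graded NR-clean" by a direct computation with a single homogeneous matrix.

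First I would set up the induction. Fix $\sigma=(h_1,\dots,h_n)\in G^n$. The matrix units $E_{11},\dots,E_{nn}$ are orthogonal homogeneous idempotents of $M_n(R)(\sigma)$, each of degree $e$ (as $1\in R_e$), summing to the identity. The corner $E_{11}M_n(R)(\sigma)E_{11}$ is graded-isomorphic to $R$ carried with the conjugated grading $g\mapsto h_1gh_1^{-1}$; since graded NR-cleanness is preserved under relabelling the homogeneous components by an (inner) automorphism of $G$, this corner is graded NR-clean. Likewise $(1-E_{11})M_n(R)(\sigma)(1-E_{11})\cong M_{n-1}(R)(\sigma')$ with $\sigma'=(h_2,\dots,h_n)$, which is graded NR-clean by the inductive hypothesis (the base case $n=1$ being trivial). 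Lemmas \ref{l2} and \ref{l4} on central homogeneous idempotents, together with Theorem \ref{t1} on lifting graded-regular elements modulo a graded-nil ideal, would be used here to organize the Peirce bookkeeping — in particular to reduce, where convenient, to the case that $R$ (hence the relevant corner ring) has no nontrivial central homogeneous idempotent.

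The core step is then: given a homogeneous $A=\begin{pmatrix} a & \rho \\ \kappa & D\end{pmatrix}\in M_n(R)(\sigma)_g$ written in $E_{11}$-block form (so $a\in R_{h_1gh_1^{-1}}$, $D\in M_{n-1}(R)(\sigma')_g$, and $\rho,\kappa$ the off-diagonal homogeneous blocks), write $a=w+q$ and $D=W_0+N_0$ using graded NR-cleanness of the two corners, with $w,W_0$ graded-regular and $q,N_0$ homogeneous nilpotent. Then subtract from $A$ a homogeneous nilpotent supported on the block diagonal and on one off-diagonal block — a block-triangular matrix with nilpotent diagonal blocks, hence nilpotent, which is exactly where the "triangular" philosophy of the theorem just proved enters — so as to reach a block matrix $W$ with diagonal blocks $w$ and $W_0$ and a chosen off-diagonal block; and finally exhibit a homogeneous reflexive generalized inverse of $W$ assembled block by block from those of $w$ and $W_0$, the associated homogeneous idempotents, and the chosen off-diagonal block. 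This yields $A=W+(A-W)$ with $W$ graded-regular and $A-W$ homogeneous nilpotent.

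The main obstacle is precisely that last manoeuvre. In the clean or nil-clean settings one can arrange, after a nilpotent perturbation, for the pivot block $w$ to be a \emph{unit}, and then plain block Gaussian elimination finishes; here $w$ is only graded-\emph{regular}, the elimination stalls at a genuinely non-invertible block matrix, and regularity has to be checked directly — all while respecting the grading, since every off-diagonal entry one introduces must land in the prescribed component $R_{h_igh_j^{-1}}$. The crux is to choose the surviving off-diagonal block so that it supplies exactly the "missing" idempotent $1-wv$ (where $w=wvw$) needed for the surjectivity/splitting behaviour of $W$, and then to verify the generalized-inverse identity $WXW=W$ with $X$ homogeneous of degree $g^{-1}$; this verification, and the admissibility of the required completion under the grading constraints, is the technical heart of the proof.
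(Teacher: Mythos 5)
Your proposal sets up the induction correctly (the Peirce corners $E_{11}M_n(R)(\sigma)E_{11}\cong R$ with the conjugated grading and $(1-E_{11})M_n(R)(\sigma)(1-E_{11})\cong M_{n-1}(R)(\sigma')$ are the right objects), but it stops short of a proof at exactly the point you yourself flag as the crux. After the nilpotent perturbation you are left with a block matrix $W=\begin{pmatrix} w & \rho \\ \lambda & W_0\end{pmatrix}$ whose diagonal blocks are only graded-regular, whose block $\rho$ is forced on you (it must agree with the corresponding block of $A$, since $A-W$ has to be nilpotent and you have placed its off-diagonal support in the opposite corner), and whose block $\lambda$ is the one free parameter. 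Everything hinges on producing a homogeneous $\lambda$ in the prescribed components $R_{h_igh_j^{-1}}$ and a homogeneous $X$ of degree $g^{-1}$ with $WXW=W$. You describe what $\lambda$ should accomplish (``supply the missing idempotent $1-wv$'') but give neither a formula for $\lambda$ and $X$ nor the verification $WXW=W$, and you explicitly label this ``the technical heart.'' Since a block matrix with regular diagonal blocks and an arbitrary prescribed off-diagonal block is in general not regular, this step cannot be waved through: as written the argument is a plan, not a proof. A second, smaller defect is the appeal to Lemmas \ref{l2} and \ref{l4} to ``organize the Peirce bookkeeping'': those lemmas require the idempotents to be \emph{central}, and $E_{11}$ is not central in $M_n(R)(\sigma)$ for $n\ge 2$ (e.g.\ $E_{11}E_{12}=E_{12}$ while $E_{12}E_{11}=0$), so they do not apply to the decomposition you are using.

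For comparison, the paper's own proof takes a different and much shorter route: it asserts that $E_{11},\dots,E_{nn}$ are central in $M_n(R)(\sigma)$, identifies each $E_{ii}M_n(R)(\sigma)$ with $R$ via the $(i,i)$ entry, and concludes in one step from Lemma \ref{l4}; there is no induction and no block computation. Be aware, however, that this argument rests on the same centrality claim that fails for $n\ge 2$, so the difficulty you isolate --- gluing graded NR-clean Peirce corners across non-central idempotents while keeping every off-diagonal entry in its component $R_{h_igh_j^{-1}}$ --- is precisely what a complete proof must address, and your submission leaves it open.
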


\begin{proof}
Let $n$ be a natural number and $\sigma \in G^n$. We denote by $E_{ii}$ the matrix having 1 on the $(i,i)$-position and 0 elsewhere. It is easy to see that $E_{11},\dots,E_{nn} $ are orthogonal idempotents of $M_n(R)_e(\sigma)$. Additionally, $E_{ii} \in Z(M_n(R)(\sigma))$ for each $i\in \{1,\dots,n\}$, and $E_{11}+\dots+E_{nn}=1$. Now, we consider  the ring homomorphism $\epsilon_i:E_{ii}M_n(R)(\sigma) \longrightarrow R$ given by $\epsilon_i(E_{ii}(a_{kj})_{n\times n})=a_{ii}$. Clearly, $\epsilon_i$ is a graded-isomorphism for every $i\in \{1,\dots,n\}$. Consequently, according to Proposition \ref{p1} $E_{ii}M_n(R)(\sigma)$ is graded NR-clean for every $i\in \{1,\dots,n\}$. Therefore, according to Lemma \ref{l4} $M_n(R)(\sigma)$ is $G$-graded NR-clean.
\end{proof}

\subsection{Group rings and trivial ring extensions}
\begin{thm}
Let $R$ be a $G$-graded ring, and $H$ a normal subgroup of $G$. Suppose that $G$ is locally finite $p$-group where $p$ is nilpotent in $R$. If $R$ is $G/H$-graded NR-clean, then $R[H]$ is $G/H$-graded NR-clean.
\end{thm}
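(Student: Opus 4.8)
The plan is to exhibit $R[H]$ as an extension of $R$ by a graded-nil ideal and then apply Theorem~\ref{t1} (with $G/H$ as the grading group). First I would set up the two relevant $G/H$-gradings: writing $R=\bigoplus_{C\in G/H}R_C$ with $R_C=\bigoplus_{x\in C}R_x$, and grouping the $G$-homogeneous components of $R[H]$ over a coset $C$, one checks that $(R[H])_C=R_C[H]$; thus a $G/H$-homogeneous element of $R[H]$ of degree $C$ is exactly an element $\alpha=\sum_{h\in H}a_h h$ with every $a_h\in R_C$.

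Next consider the augmentation map $\varepsilon\colon R[H]\longrightarrow R$, $\sum_{h}a_h h\mapsto\sum_{h}a_h$. From the previous paragraph $\varepsilon\bigl((R[H])_C\bigr)\subseteq R_C$ for every $C$, so $\varepsilon$ is a degree-preserving epimorphism of $G/H$-graded rings and hence $R[H]/I\cong R$ as $G/H$-graded rings, where $I=\ker\varepsilon$ is the augmentation ideal. Since $I$ is generated by the elements $h-1$, which are homogeneous of degree $eH$, it is a homogeneous ideal; so by Theorem~\ref{t1} it suffices to show that $I$ is graded-nil.

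This is the step I expect to be the \textbf{main obstacle}. Let $\alpha=\sum_{h}a_h h\in I$ be $G/H$-homogeneous. Its support is finite, hence generates a finite subgroup $H_0\le H$; because $H$ is a subgroup of the locally finite $p$-group $G$, $H_0$ is a finite $p$-group, and $\alpha$ lies in the augmentation ideal $\Delta(R[H_0])$ of $R[H_0]$. It therefore suffices to prove the classical fact that $\Delta(R[H_0])$ is nilpotent when $H_0$ is a finite $p$-group and $p$ is nilpotent in $R$. I would argue as follows: the ideal $(pR)[H_0]$ of $R[H_0]$ is nilpotent (as $p^m\cdot 1=0$ for some $m$), and $R[H_0]/(pR)[H_0]\cong (R/pR)[H_0]$, so after replacing $R$ by $R/pR$ we may assume $p\cdot 1=0$. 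Now induct on $|H_0|$: a nontrivial finite $p$-group has a central subgroup $\langle z\rangle$ of order $p$, the element $z-1$ is central in $R[H_0]$ (scalars and group elements commute in a group ring), and $(z-1)^p=z^p-1=0$ since the intermediate binomial coefficients are divisible by $p$; hence $J:=R[H_0](z-1)$ is a nilpotent two-sided ideal with $R[H_0]/J\cong R[H_0/\langle z\rangle]$. Applying the inductive hypothesis to the smaller $p$-group $H_0/\langle z\rangle$ shows $\Delta(R[H_0])^k\subseteq J$ for some $k$, whence $\Delta(R[H_0])^{kp}=0$.

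Granting this, every $G/H$-homogeneous element of $I$ is nilpotent, so $I$ is graded-nil, and since $R[H]/I\cong R$ is $G/H$-graded NR-clean by hypothesis, Theorem~\ref{t1} gives that $R[H]$ is $G/H$-graded NR-clean. All the work outside the nilpotence of $\Delta(R[H_0])$ is just bookkeeping with the two compatible gradings; alternatively one could cite the nilpotence of the augmentation ideal of $R[H_0]$ directly from the group-ring literature.
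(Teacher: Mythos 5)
Your proof is correct and follows essentially the same route as the paper: both pass through the augmentation epimorphism $R[H]\longrightarrow R$, check that it is a degree-preserving map of $G/H$-graded rings whose kernel is a homogeneous graded-nil ideal, and conclude via Theorem~\ref{t1}. The only substantive difference is in how the kernel is shown to be graded-nil: the paper reduces to the case of finite $H$ by invoking the argument of Theorem 2.3 in \cite{9} and then cites Connell (Theorem 9 in \cite{10}) for the nilpotence of the augmentation ideal, whereas you reduce graded-nilness to an element-wise check inside $R[H_0]$ for a finite $p$-subgroup $H_0$ generated by the support and then prove the nilpotence of $\Delta(R[H_0])$ from scratch by the standard induction through a central subgroup of order $p$ --- both steps are sound, and your element-wise use of local finiteness is arguably the cleaner of the two.
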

\begin{proof}
    Assume that $R$ is $G/H$-graded NR-clean. According to the demonstration  of Theorem 2.3 in \cite{9}, we can suppose that $H$ is a finite $p$-group; therefore $H=\{h_1,\dots,h_n\}$ where $h_i\in H$ for  $i\in \{1,\dots, n\}$.  Let $\phi$ be the epimorphism $\phi: R[H] \longrightarrow R$ defined by $\phi(\sum_{i=1}^n r_ih_i)=\sum_{i=1}^n r_i$, where $r_i \in R$ for each $i\in \{1,\dots, n\}$. According to page 180 of \cite{6}, we have $\phi$ is a degree-preserving epimorphism of $G/H$-graded-rings. Thus, according Lemma 4.1 in \cite{66} $\ker(\phi)$ is a homogeneous ideal of the $G/H$-graded ring $R[H]$. Consequently, $R[H]/\ker(\phi)$ and $R$ are $G/H$-graded-isomorphic rings. Hence, $R[H]/\ker(\phi)$ is $G/H$-graded NR-clean. Additionally, since $p \in N(R)$ then by  Theorem 9 in \cite{10}, we have $\ker(\phi)$ is a nilpotent ideal of $R[H]$.  It follows that $\ker(\phi)$ is a graded-nil ideal. According to  Theorem \ref{t1}, $R[H]$ is $G/H$-graded NR-clean.
\end{proof}

\begin{prop}
    Let  $R$ be a $G$-graded ring. If $(fR)[G]$ and $((1-f)R)[G]$ are $G$-graded NR-clean, where  $f$ is a central homogeneous  idempotent of $R$, then $R[G]$ is graded NR-clean.
\end{prop}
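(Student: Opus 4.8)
The plan is to realize $R[G]$ as the Peirce decomposition attached to the central homogeneous idempotent $f$, and then to identify the two resulting corner rings with the group rings appearing in the hypothesis.

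First I would check that $f$, viewed inside $R[G]$ as the element $f\cdot e$ (where $e$ is the identity of $G$), is a central homogeneous idempotent of the $G$-graded ring $R[G]$. Since $f$ is a homogeneous idempotent of $R$ it lies in $R_e$, so $f\cdot e\in R_e\,e\subseteq (R[G])_e$; thus $f$ is homogeneous of degree $e$ in $R[G]$, and it is obviously still idempotent. Centrality follows from the definition of the twisted multiplication of $R[G]$ together with $f\in Z(R)$: for any $r_h\in R_h$ and $h'\in G$ one computes $(f\,e)(r_h h')=f r_h\,h'=r_h f\,h'=(r_h h')(f\,e)$, and additivity extends this to all of $R[G]$. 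Consequently $1-f$ is a central homogeneous idempotent as well, and $R[G]=fR[G]\oplus(1-f)R[G]$.

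Next I would establish a degree-preserving graded isomorphism $fR[G]\cong (fR)[G]$, and symmetrically $(1-f)R[G]\cong ((1-f)R)[G]$. Recall that $fR$ is a $G$-graded ring with $(fR)_x=fR_x$, and that the $g$-th component of $fR[G]$ is $f(R[G])_g=f\bigl(\oplus_{h\in G}R_{gh^{-1}}h\bigr)=\oplus_{h\in G}fR_{gh^{-1}}h=\oplus_{h\in G}(fR)_{gh^{-1}}h$, which is exactly the $g$-th component of $(fR)[G]$. Sending $\sum_{h}(fr_h)h\in fR[G]$ to the same formal sum in $(fR)[G]$ is therefore a degree-preserving additive bijection carrying the unit $f\cdot e$ of $fR[G]$ to the unit of $(fR)[G]$; because $f$ is central the twisted products $f r_g r_h(h^{-1}g'hh')$ computed in the two rings coincide, so the map is a graded ring isomorphism. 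The step I expect to require the most care is precisely this identification: one must track the twist $(r_g g')(r_h h')=r_g r_h(h^{-1}g'hh')$ and verify that multiplying through by the central idempotent $f$ neither disturbs it nor mixes up the homogeneous components.

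Finally, by hypothesis $(fR)[G]$ and $((1-f)R)[G]$ are $G$-graded NR-clean; since being graded NR-clean is invariant under graded isomorphism (apply Proposition \ref{p1} to the isomorphism and to its inverse), the rings $fR[G]$ and $(1-f)R[G]$ are graded NR-clean as well. As $f$ is a central homogeneous idempotent of $R[G]$, Lemma \ref{l2} applied to $R[G]$ and $f$ then gives that $R[G]$ is graded NR-clean, completing the proof.
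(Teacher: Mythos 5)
Your proposal is correct and follows essentially the same route as the paper: identify $fR[G]$ with $(fR)[G]$ (and similarly for $1-f$) via a graded isomorphism, then apply Lemma \ref{l2} to the central homogeneous idempotent $f$ inside $R[G]$. You are in fact more careful than the paper in verifying that $f\cdot e$ remains central and homogeneous in $R[G]$ despite the twisted multiplication, and that the identification of components respects the twist -- details the paper leaves implicit.
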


\begin{proof}
    According to Lemma \ref{l2}, It is sufficient to prove that $f(R[G])$ and $(1-f)(R[G])$ are graded NR-clean. Indeed, the homomorphism of rings $\varphi:(fR)[G] \longrightarrow f(R[G])$ defined by $\varphi(\sum_{g\in G} fr_gg)=f\sum_{g\in G}r_gg $, is a $G$-graded-isomorphism. Whereas, $(fR)[G]$ is graded NR-clean, then $f(R[G])$ is graded NR-clean. Similarly, $(1-f)(R[G])$ is graded NR-clean. Therefore, according to Lemma \ref{l2}, $R[G]$ is graded NR-clean.
\end{proof}

\begin{prop}
    Let $R$ be a $G$-graded ring where $G=\{e,g\}$ is a finite group of order two. If $R$ is graded NR-clean, then $R[G]$ is graded NR-clean.
\end{prop}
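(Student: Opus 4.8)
The plan is to push the problem down to the identity component of $R[G]$. Since $|G|=2$ the group $G$ is abelian, so $R[G]$ is $G$-graded with $(R[G])_e=R_ee\oplus R_gg$ and $(R[G])_g=R_ge\oplus R_eg$, and the group element $g$ is a homogeneous unit of $R[G]$ of degree $g$ lying in $Z(R[G])$ with $g^2=1$. First I would record what this provides: right multiplication by $g$ is a bijection $(R[G])_e\to(R[G])_g$ which, because $g$ is central and $g^2=1$, carries graded-regular elements to graded-regular elements (from $r=rxr$ with $x\in(R[G])_e$ one gets $rg=(rg)(xg)(rg)$ with $xg\in(R[G])_g$) and carries homogeneous nilpotents to homogeneous nilpotents (since $(ng)^k=n^kg^k$).

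Next I would isolate the core claim. If $x\in(R[G])_e$ and $x=r+n$ is a graded NR-clean decomposition in $R[G]$, then directness of the grading forces $r,n\in(R[G])_e$, and the graded-regularity witness of $r$ is forced to have degree $e^{-1}=e$, hence also lies in $(R[G])_e$; so $x$ is graded NR-clean in $R[G]$ precisely when $x$ is NR-clean in the ring $(R[G])_e$. Together with the previous paragraph this dispatches the degree-$g$ case: for homogeneous $z$ of degree $g$, write $z=(zg)g$, use an NR-clean decomposition $zg=r+n$ in $(R[G])_e$, and multiply back to obtain the graded NR-clean decomposition $z=rg+ng$. Hence the whole proposition reduces to a single assertion: $(R[G])_e$ is an NR-clean ring.

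Then I would note that $ae+bg\mapsto a+b$ (for $a\in R_e$, $b\in R_g$) is a ring isomorphism from $(R[G])_e$ onto $R$, which moreover carries $R_ee\oplus R_gg$ onto the grading $R_e\oplus R_g$; so $(R[G])_e\cong R$ as graded rings and is in particular graded NR-clean. It then remains to promote ``$R$ is graded NR-clean'' to ``$R$ is NR-clean'' for a ring graded by a group of order two, and for this I would take $r=r_0+r_1$ with $r_0\in R_e$, $r_1\in R_g$, decompose $r_0$ and $r_1$ via the graded NR-clean hypothesis, and attempt to reassemble a regular-plus-nilpotent decomposition of $r$ using the grading relations $R_gR_g\subseteq R_e$ and $R_eR_g=R_gR_e\subseteq R_g$.

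I expect the main difficulty to be this final step. Decomposing $r_0$ and $r_1$ separately and then adding the regular parts and the nilpotent parts does not work: a sum of two nilpotents need not be nilpotent (for instance $E_{12}+E_{21}$ has a unit as its square) and a sum of two regular elements need not be regular, so one must carefully control how the regular and nilpotent constituents of $r_0$ and $r_1$ multiply against one another through the pairing $R_g\times R_g\to R_e$. Also, the quotient-by-a-nil-ideal technique of Theorem \ref{t1} and of the earlier theorem on $R[H]$ is unavailable here, because for a general graded NR-clean $R$ in which $2$ is not nilpotent the augmentation ideal of $R[G]$ is neither nilpotent nor even a homogeneous ideal; so the last step seems to require an argument specific to the order-two situation.
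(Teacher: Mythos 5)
Your reduction is sound as far as it goes: using that $g$ is a central homogeneous unit of degree $g$ with $g^2=1$, the degree-$g$ case does follow from the degree-$e$ case, and the degree-$e$ case is equivalent to the ring $(R[G])_e\cong R$ being NR-clean (as an ungraded ring). But that last assertion --- that a ring graded by a group of order two which is graded NR-clean is actually NR-clean --- is exactly where the content of the proposition lives, and you leave it unproven. You correctly observe that the naive strategy (decompose $r_0\in R_e$ and $r_1\in R_g$ separately and add the pieces) fails because sums of nilpotents and sums of regular elements are not controlled, and you offer no replacement. So the proposal is a correct reduction followed by an open problem, not a proof.

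The paper closes precisely this gap by a different device: it exhibits an explicit graded isomorphism $\alpha:R[G]\longrightarrow M_2(R)(\sigma)$ with $\sigma=(e,g)$, sending $(a_ee+a_gg)+(b_ge+b_eg)$ to $\begin{pmatrix} a_e & a_g\\ a_g & a_e\end{pmatrix}+\begin{pmatrix} b_g & b_e\\ b_e & b_g\end{pmatrix}$, and then invokes Theorem \ref{t3} (graded NR-clean passes to $M_n(R)(\sigma)$ for every $\sigma$), whose proof runs through the Peirce decomposition of $M_2(R)(\sigma)$ into the corners $E_{ii}M_2(R)(\sigma)\cong R$ rather than through any ungraded NR-cleanness of $R$. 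Note that your missing step is in fact a \emph{consequence} of this argument: $M_2(R)_e(\sigma)=\begin{pmatrix} R_e & R_g\\ R_g & R_e\end{pmatrix}\cong (R[G])_e\cong R$ is NR-clean by Theorem \ref{t3} together with Proposition \ref{p1}(1) --- but establishing it that way is just the paper's proof rearranged, so you cannot use it as an independent ingredient. To repair your write-up with minimal change, replace the final paragraph by the matrix-ring identification and the appeal to Theorem \ref{t3}.
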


\begin{proof}
Let $\sigma=(e,g)$. We have that $M_2(R)_e(\sigma)=\begin{pmatrix}
        R_e & R_g \\
        R_g &  R_e
\end{pmatrix}$ and $M_2(R)_g(\sigma)=\begin{pmatrix}
        R_g & R_e \\
        R_e &  R_g
\end{pmatrix}$. Moreover, We have that $(R[G])_e=R_ee+R_gg$ and $(R[G])_g=R_ge+R_eg$. Now,  let's consider the mapping $\alpha:R[G] \longrightarrow M_2(R)(\sigma)$, defined by $\alpha\big((a_ee+a_gg)+(b_ge+b_eg)\big)=\begin{pmatrix}
        a_e & a_g \\
        a_g &  a_e
    \end{pmatrix}+\begin{pmatrix}
        b_g & b_e \\
        b_e &  b_g
    \end{pmatrix}$, where $a_e,b_e\in R_e$ and $a_g,b_g \in R_g$. In fact, the mapping $\alpha$ is a graded-isomorphism of rings. Yet, according to Theorem \ref{t3} $M_2(R)(\sigma)$ is $G$-graded NR-clean. Thus, according to Proposition \ref{p1} $R[G]$ is graded NR-clean.
\end{proof}

\begin{prop}
Let $R$ be a graded ring. Then, if $R[G]$ is graded NR-clean then $R$ is an NR-clean ring.
\end{prop}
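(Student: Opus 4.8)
The plan is to reduce this ungraded statement to the first part of Proposition \ref{p1} by identifying the underlying ring $R$ with the identity component of the graded group ring $R[G]$. Recall from the introduction that the $e$-component of $R[G]$ is $(R[G])_e = \oplus_{h\in G} R_{h^{-1}} h$. So the first step is to write down the natural candidate isomorphism $R \to (R[G])_e$ and to verify that it is a ring isomorphism onto this component.

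Concretely, I would define $\psi : R \longrightarrow (R[G])_e$ by $\psi\!\left(\sum_{g\in G} r_g\right) = \sum_{g\in G} r_g g^{-1}$, where $r=\sum_g r_g$ is the homogeneous decomposition of $r\in R$. For a single homogeneous piece $r_g\in R_g$ this reads $\psi(r_g)=r_g g^{-1}$, and since $r_g\in R_g$, the element $r_g g^{-1}$ lies in the summand of $(R[G])_e$ indexed by $h=g^{-1}$, whose coefficient group is $R_{h^{-1}}=R_g$; hence $\psi$ is well defined. Being defined summand-by-summand along a direct sum, $\psi$ is additive and bijective, its inverse sending $\sum_h s_h h$ (with $s_h\in R_{h^{-1}}$) to $\sum_h s_h$.

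The key step is multiplicativity. Since $\psi$ is additive and every element of $R$ is a finite sum of homogeneous elements, it suffices to check $\psi(r_a s_b)=\psi(r_a)\psi(s_b)$ for $r_a\in R_a$ and $s_b\in R_b$. On the left, $r_a s_b\in R_{ab}$, so $\psi(r_a s_b)=r_a s_b\,(ab)^{-1}$. On the right, applying the twisted multiplication $(r_g g')(r_h h')=r_g r_h\,(h^{-1}g'hh')$ of $R[G]$ with $g'=a^{-1}$, $h=b$, and $h'=b^{-1}$ gives $(r_a a^{-1})(s_b b^{-1}) = r_a s_b\,(b^{-1}a^{-1}b b^{-1}) = r_a s_b\,(ab)^{-1}$, which matches. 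I expect this group-element bookkeeping—keeping track of the conjugation factor $h^{-1}g'hh'$—to be the only delicate point; everything else is formal. Note that this computation uses no commutativity assumption on $G$, so the argument applies to the general grading recalled in the introduction.

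With the ring isomorphism $R \cong (R[G])_e$ established, I would conclude as follows. Assuming $R[G]$ is graded NR-clean, Proposition \ref{p1}(1) applied to the graded ring $R[G]$ shows that its identity component $(R[G])_e$ is an NR-clean ring in the ordinary (ungraded) sense. Transporting the NR-clean property across the isomorphism $\psi$—a ring isomorphism carries regular elements to regular elements and nilpotents to nilpotents, hence preserves NR-clean decompositions—then yields that $R$ itself is NR-clean, which is exactly the assertion.
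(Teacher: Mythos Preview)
Your proposal is correct and follows essentially the same route as the paper: apply Proposition \ref{p1}(1) to conclude that $(R[G])_e$ is NR-clean, identify $R$ with $(R[G])_e$, and transport the property across this ring isomorphism. The only difference is that you construct and verify the isomorphism $\psi$ by hand (your multiplicativity check via the twisted product is correct), whereas the paper cites this as Proposition 2.1 in \cite{8} and invokes Proposition 3 in \cite{1} for the invariance of NR-cleanness under ring isomorphism.
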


\begin{proof}
If $R[G]$ is graded NR-clean, then according to Proposition \ref{p1}, we have $(R[G])_e$ is NR-clean. According to Proposition 2.1 in \cite{8}, $(R[G])_e$ and $R$ are isomorphic. Therefore, by applying Proposition 3 in \cite{1}, we get $R$ is NR-clean.
\end{proof}

\begin{thm}
    Let $A$ be a $G$-graded ring and $M$ a $G$-graded $(A,A)$-bimodule. Then, $R=A\propto M$  is graded NR-clean if and only if $A$ is graded NR-clean.
\end{thm}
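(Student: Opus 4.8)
The plan is to realize $M$ as a graded-nil homogeneous ideal of $R=A\propto M$ whose quotient is graded-isomorphic to $A$, and then invoke Theorem \ref{t1}. Concretely, set $I:=0\propto M=\{(0,m):m\in M\}$. Since multiplication in $R$ is $(a,e)(b,f)=(ab,af+eb)$, we have $(a,e)(0,m)=(0,am)$ and $(0,m)(b,f)=(0,mb)$, so $I$ is a two-sided ideal of $R$. Using the grading $R_g=A_g\oplus M_g$ one checks immediately that $I\cap R_g=0\oplus M_g$ and $I=\oplus_{g\in G}(I\cap R_g)$, so $I$ is a homogeneous ideal. Moreover, for any homogeneous $(0,m)\in I\cap R_g$ we have $(0,m)^2=(0,0)$; hence every homogeneous element of $I$ is nilpotent, i.e. $I$ is graded-nil.

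Next I would identify the quotient. The projection $\pi:R\longrightarrow A$, $\pi(a,m)=a$, is a surjective ring homomorphism with $\ker\pi=I$, and it is degree-preserving because $\pi(R_g)=\pi(A_g\oplus M_g)=A_g$. Therefore $\pi$ induces a degree-preserving ring isomorphism $R/I\cong A$ of $G$-graded rings, under which the $g$-th component $(R/I)_g=R_g/(I\cap R_g)=(A_g\oplus M_g)/(0\oplus M_g)$ corresponds to $A_g$.

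Finally, apply Theorem \ref{t1} to the graded-nil ideal $I$ of $R$: $R$ is graded NR-clean if and only if $R/I$ is graded NR-clean. Combining this with the graded isomorphism $R/I\cong A$ yields that $R=A\propto M$ is graded NR-clean if and only if $A$ is graded NR-clean, which is the claim. (If one prefers not to cite Theorem \ref{t1}, one direction also follows from Proposition \ref{p1} since $A$ is a graded-homomorphic image of $R$; the converse is the substantive direction and is exactly what the lifting argument behind Theorem \ref{t1}, via Lemma \ref{l1}, provides.)

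\textbf{Main obstacle.}
There is essentially no deep obstacle here: the entire content is in choosing the correct ideal $I=0\propto M$ and verifying the two bookkeeping facts that it is homogeneous and graded-nil and that $R/I$ is graded-isomorphic to $A$. The only point requiring a little care is confirming that $I$ is genuinely a \emph{graded} ideal (so that Theorem \ref{t1} applies) rather than merely an ideal, which is immediate from $R_g=A_g\oplus M_g$; once this is in place, Theorem \ref{t1} closes the argument with no further work.
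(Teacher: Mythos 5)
Your proposal is correct and follows exactly the paper's own argument: the paper likewise takes $I=0\propto M$, notes $I^2=0$ so $I$ is graded-nil, identifies $R/I$ with $A$ via the graded epimorphism $(a,m)\mapsto a$, and concludes by Theorem \ref{t1}. Your version merely spells out the homogeneity check $I=\oplus_{g\in G}(0\oplus M_g)$ in more detail, which is a welcome addition but not a different route.
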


\begin{proof}
   Since $(0\propto M)^2=0$, then $0\propto M$ is a graded-nil ideal of $R$. Moreover, the homomorphism $\epsilon:R \longrightarrow A$ defined by $\epsilon((a,e))=a$, is a graded-epimorphism, whose  kernel is exactly $0 \propto M$; therefore $R/(0\propto M)$ and $A$ are graded-isomorphic. Now, according to Theorem \ref{t1}, $R$ is graded NR-clean if and only if $A$ is graded NR-clean.
\end{proof}

\end{document}